\newtheorem{theorem}{Theorem}[section]
\newtheorem{lemma}[theorem]{Lemma}
\newtheorem{observation}[theorem]{Observation}
\theoremstyle{remark}
\newtheorem{remark}[theorem]{Remark}
\newcommand{\figdir}{./fig}
\DeclareMathOperator{\tw}{\mathsf{tw}}
\newcommand{\quadform}[2]{#2^{*} #1 #2}
\begin{document}

\title{An improved spectral lower bound of treewidth\tnoteref{t1}}
\tnotetext[t1]{%
Partially supported
by JSPS KAKENHI Grant Numbers 
JP20H05793, 
JP20H05967, 
JP21K11752, 
JP21H05852, 
JP21K17707, 
JP21K19765, 
JP22H00513. 
}

\author[1]{Tatsuya Gima}
\ead{gima@ist.hokudai.ac.jp}

\author[2]{Tesshu Hanaka}
\ead{hanaka@inf.kyushu-u.ac.jp}

\author[3]{Kohei Noro\fnref{fn-noro}}
\ead{noro.kohei@nagoya-u.jp}

\author[3]{Hirotaka Ono}
\ead{ono@nagoya-u.jp}

\author[3]{Yota Otachi\corref{cor1}}
\ead{otachi@nagoya-u.jp}

\cortext[cor1]{Corresponding author}
\fntext[fn-noro]{This work was done while he was a student at Nagoya University.
He is currently working at KOEI TECMO GAMES CO., LTD.}

\address[1]{Hokkaido University, Sapporo, Japan}
\address[2]{Kyushu University, Fukuoka, Japan}
\address[3]{Nagoya University, Nagoya, Japan}

\begin{abstract}
We show that for every $n$-vertex graph with at least one edge, 
its treewidth is greater than or equal to $n \lambda_{2} / (\Delta + \lambda_{2}) - 1$, where $\Delta$ and $\lambda_{2}$ are
the maximum degree and the second smallest Laplacian eigenvalue of the graph, respectively.
This lower bound improves the one by Chandran and Subramanian~[\textit{Inf.\ Process.\ Lett.}, 2003]
and the subsequent one by the authors of the present paper~[\textit{IEICE Trans.\ Inf.\ Syst.}, 2024].
The new lower bound is \emph{almost} tight in the sense that
there is an infinite family of graphs such that the lower bound is only~$1$ less than the treewidth
for each graph in the family.
Additionally, using similar techniques, 
we also present a lower bound of treewidth in terms of the largest and the second smallest Laplacian eigenvalues.
\end{abstract}

\begin{keyword}
treewidth, Laplacian eigenvalue.
\end{keyword}

\maketitle


\section{Introduction}

The concept of treewidth is one of the most well-studied measures of decomposability of graphs.
Roughly speaking, a graph with small treewidth admits a tree-like decomposition of small width,
which can be used in various ways, most notably for designing efficient algorithms~\cite{ArnborgLS91,Bodlaender98}.
Since the computation and the estimation of treewidth are important tasks,
several techniques for lower-bounding and upper-bounding treewidth are developed (see \cite{BodlaenderK10,BodlaenderK11}).
The most relevant to this paper is the spectral approach initiated by Chandran and Subramanian~\cite{ChandranS03}.
We follow this direction and present an improved lower bound of treewidth using the spectral approach.

Chandran and Subramanian~\cite{ChandranS03}
showed that for every $n$-vertex graph with at least one edge,
\begin{equation}
  \tw(G) \ge \frac{3 n \lambda_{2}}{4\Delta + 8 \lambda_{2}} - 1,
  \label{eq:ChandranS03}
\end{equation}
where $\tw(G)$ is the treewidth of $G$,
$\lambda_{2}$ is the second smallest Laplacian eigenvalue of $G$,
and $\Delta$ is the maximum degree of $G$.\footnote{See \cref{sec:pre} for definitions of the terms undefined here.}
Subsequently, the authors of the present paper showed a slightly improved lower bound~\cite{GimaHNOO2024}
in which the denominator $4\Delta + 8 \lambda_{2}$ in \cref{eq:ChandranS03} is replaced with 
$\max\{4\Delta + 3 \lambda_{2}, \, 3\Delta + 4.5 \lambda_{2}\}$.

In this paper, we further pursue lower bounds of treewidth in terms of the Laplacian eigenvalues
and obtain the following bound:
\begin{equation}
  \tw(G) \ge \frac{n \lambda_{2}}{\Delta + \lambda_{2}} -1.
  \label{eq:ours}
\end{equation}
As we will see later, the new bound is tight up to an additive factor of~$1$.
(See \cref{sec:improved-bound}.)

By using some of the techniques used for obtaining \cref{eq:ours}
and an inequality found by Gu and Liu~\cite{GuL22}, we also show the following lower bound, which is incomparable to \cref{eq:ours}:
\begin{equation}
  \tw(G) \ge \frac{2n \lambda_{2}}{3\lambda_{n} - \lambda_{2}}-1,
  \label{eq:ours-2}
\end{equation}
where $\lambda_{n}$ is the largest Laplacian eigenvalue of $G$.
This bound is tight for complete graphs.
(See \cref{sec:second-bound}.)


\section{Preliminaries}
\label{sec:pre}

\subsection*{Laplacian eigenvalues}
Let $G = (V,E)$ be an $n$-vertex graph. The \emph{Laplacian matrix} of $G$ is the $n \times n$ matrix $L \in \mathbb{Z}^{V \times V}$ such that
\[
  L(u,v) = 
  \begin{cases}
    \deg(u) & u = v, \\
    \ -1 & \{u,v\} \in E, \\
    \ \phantom{+}0 & \text{otherwise}.
  \end{cases}
\]
The \emph{Laplacian eigenvalues} of $G$ are the eigenvalues of the Laplacian matrix of $G$.
We often denote the Laplacian eigenvalues of a graph by $\lambda_{1}, \lambda_{2}, \dots, \lambda_{n}$ in non-decreasing order.

It is known that $\lambda_{1} = 0$ for every graph.
The second smallest Laplacian eigenvalue $\lambda_{2}$ is also known as the \emph{algebraic connectivity}~\cite{Fiedler73} or the \emph{Fiedler value}~\cite{SpielmanT07}.
It is known that $\lambda_{2}$ is bounded from above by the vertex connectivity of the graph (see~\cite{BrouwerH2011}).
For the largest Laplacian eigenvalue $\lambda_{n}$, it is known that for every graph with at least one edge,
it holds that $\Delta + 1 \le \lambda_{n} \le \max_{\{u,v\} \in E} (\deg(u) + \deg(v))$~\cite{GroneM94,AndersonM71}.
It is known that $\lambda_{2} = \lambda_{n}$ if and only if
the graph is either edge-less or complete~(see \cite[Propositions~1.3.3 and 1.3.7, and Section~1.4.1]{BrouwerH2011}).

\subsection*{Treewidth}
Although the treewidth is the central concept in this paper, we do not need its precise definition in our proofs.
We only need the following property (\cref{lem:RobertsonS86}) shown by Robertson and Seymour~\cite{RobertsonS86}.
(See e.g., \cite{Bodlaender98} for the definition of treewidth.)

For a graph $G = (V,E)$, we denote its \emph{treewidth} by $\tw(G)$.
For $S \subseteq V$, let $G-S$ denote the subgraph of $G$ obtained by deleting the vertices in $S$.
\begin{lemma}
[\cite{RobertsonS86}]
\label{lem:RobertsonS86}
Every $n$-vertex graph $G = (V,E)$ has a vertex set $S \subseteq V$ with $|S| \le \tw(G)+1$
such that each connected component of $G - S$ contains at most $\frac{1}{2} (n-|S|)$ vertices.
\end{lemma}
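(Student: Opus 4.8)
\emph{Proof plan.}

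The plan is to read the statement off a tree decomposition of $G$ of minimum width. Fix such a decomposition $(T,\beta)$; by contracting every edge $\{t_1,t_2\}$ of $T$ with $\beta(t_1)\subseteq\beta(t_2)$ we may assume that no bag is contained in an adjacent bag. For a node $t\in V(T)$ and a connected component (subtree) $D$ of $T-t$, put $V_D:=\bigl(\bigcup_{s\in D}\beta(s)\bigr)\setminus\beta(t)$. The sets $V_D$ partition $V(G)\setminus\beta(t)$, and every connected component of $G-\beta(t)$ lies inside a single $V_D$, because $\beta(t)$ separates in $G$ the vertex sets associated with distinct components of $T-t$. Hence it suffices to find a node $t$ such that $|V_D|\le\frac12(n-|\beta(t)|)$ for every component $D$ of $T-t$: then $S:=\beta(t)$ has $|S|\le\tw(G)+1$ and every component of $G-S$ has at most $\frac12(n-|S|)$ vertices.

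To locate a candidate node I would use the classical orientation argument. Orient each edge $\{t_1,t_2\}$ of $T$ towards $t_2$ exactly when the component $D$ of $T-t_1$ containing $t_2$ satisfies $|V_D|>\frac{n}{2}$. No edge is oriented in both directions, since the two associated sets (one for each orientation) are disjoint subsets of $V(G)$ and so cannot both have more than $\frac{n}{2}$ vertices. As $T$ is a tree, if every node had out-degree at least one the number of arcs would exceed $|V(T)|-1$, forcing a doubly-oriented edge; so some node $t$ has out-degree zero, i.e.\ $|V_D|\le\frac{n}{2}$ for every component $D$ of $T-t$. If moreover $|V_D|\le\frac12(n-|\beta(t)|)$ for every such $D$, we are done with $S=\beta(t)$.

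The remaining work is to upgrade the bound $\frac{n}{2}$ to the sharper $\frac12(n-|\beta(t)|)$, and this is the main obstacle, since the plain orientation argument genuinely yields only $\frac{n}{2}$. If the sink $t$ above is not already good, there is a \emph{unique} overfull component $D^{*}$ of $T-t$, with $\frac12(n-|\beta(t)|)<|V_{D^{*}}|\le\frac{n}{2}$ (two such would already exceed $n-|\beta(t)|=\sum_D|V_D|$). Writing $t^{*}$ for the neighbour of $t$ in $D^{*}$, $A:=\beta(t)\cap\beta(t^{*})$ and $W:=\bigcup_{s\in D^{*}}\beta(s)$, the set $A$ separates $W$ from $V(G)\setminus W$ in $G$, and I would recurse on the instance obtained from $G[W]$ by attaching at $t^{*}$ one new leaf bag $A\cup\{z\}$, where $z$ is a single new vertex carrying weight $n-|W|$ and every old vertex keeps weight $1$. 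This instance has the same treewidth bound and strictly fewer vertices (after a mild normalization for the degenerate case in which $\beta(t)$ shares all but one of its vertices with $\beta(t^{*})$), and a weighted balanced node of it cannot be the new leaf, so it lies in $D^{*}$; because $z$ carries exactly the mass of $V(G)\setminus W$, that node pulls back to a node of $T$ with the property we want for $G$. Carrying this out as an induction on $|V(G)|$ for the weighted version of the statement — in particular verifying the exact identity between the weight of a subtree in the reduced instance and the size of the corresponding subtree in $G$ — is the one step that needs genuine care; the orientation part is routine.
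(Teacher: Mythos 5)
The paper does not prove this lemma (it is quoted from Robertson--Seymour), so the only question is whether your plan would yield a correct proof, and it would not: the statement you set out to establish --- that some node $t$ of a fixed minimum-width tree decomposition (normalized so that no bag is contained in an adjacent bag) satisfies $|V_D|\le\frac12\,(n-|\beta(t)|)$ for every component $D$ of $T-t$, so that one may take $S:=\beta(t)$ --- is simply false. Take $G$ the path $a$--$b$--$c$, with $\tw(G)=1$; the normalized width-$1$ decomposition is $\{a,b\}$--$\{b,c\}$ (your preprocessing contracts away any bag $\{b\}$), and for either node the deletion of the bag leaves a component of size $1>\frac12(3-2)$. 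The conclusion of the lemma holds here only with the proper sub-bag $S=\{b\}$. So no amount of recursion can produce "a node of $T$ with the property we want for $G$", which is what your final sentence promises; any correct proof must at some point allow $S$ to be a strict subset of a bag (or refine the decomposition by inserting new bags such as $\{b\}$), and your plan never does.

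The failure is visible inside the recursion itself, precisely at the point you defer: in the degenerate case you flag (where $\beta(t)$ shares all but one vertex with $\beta(t^{*})$ and $V\setminus W=\beta(t)\setminus A$), the reduced instance has the same number of vertices and, with $z$ joined to $A$, is isomorphic as a weighted instance to the one you started from --- in the path example the reduction sends $a$--$b$--$c$ with unit weights to $z$--$b$--$c$ with unit weights --- so the induction makes no progress, and no "weighted balanced node" exists in the reduced instance either. This is not a corner case to be handled by a "mild normalization"; it is exactly the situation in which the desired $S$ cannot be a full bag, i.e.\ it is the heart of the lemma. The parts you do carry out (the orientation argument, the disjointness of the two sides of a doubly oriented edge, uniqueness of the overfull component, $N(V\setminus W)\cap W\subseteq A$) are fine, but as you yourself note they only give the bound $\frac{n}{2}$; the upgrade to $\frac12(n-|S|)$ is the entire content of the lemma, and the route you sketch for it cannot be completed.
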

\cref{lem:RobertsonS86} was used in the previous studies~\cite{ChandranS03,GimaHNOO2024} as well.
One of the main technical differences in this study from the previous ones is that 
we use \cref{lem:RobertsonS86} in the following form,
which can be obtained by appropriately partitioning the connected components of $G - S$ into (at most) three sets.
\begin{lemma}
[See e.g., {\cite[Lemma 9]{GruberH08}}] 
\label{lem:balanced-three-partition}
For every graph $G = (V,E)$, there is a set $S \subseteq V$ with $|S| \le \tw(G)+1$ such that
$V \setminus S$ can be partitioned into three (possibly empty) sets $U_{1}$, $U_{2}$, $U_{3}$ with the following properties:
\begin{itemize}
  \item $|U_{i}| \le \frac{1}{2} (n-|S|)$ for each $i \in \{1,2,3\}$; and
  \item there is no edge between two sets $U_{i}$ and $U_{j}$ for $i \ne j$.
\end{itemize}
\end{lemma}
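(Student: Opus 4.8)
The plan is to derive the statement directly from \cref{lem:RobertsonS86} by a load-balancing argument on the sizes of the connected components of $G-S$. First I would invoke \cref{lem:RobertsonS86} to get $S\subseteq V$ with $|S|\le \tw(G)+1$ such that every connected component of $G-S$ has at most $\tfrac12(n-|S|)$ vertices. Write $m=n-|S|$, and let $C_1,\dots,C_k$ be the connected components of $G-S$ with sizes $a_i=|C_i|$, so that $\sum_{i=1}^{k}a_i=m$ and $a_i\le m/2$ for every $i$. It then suffices to partition $\{C_1,\dots,C_k\}$ into three groups, each of total size at most $m/2$: taking $U_j$ to be the union of the components in the $j$-th group yields the desired sets, since any edge of $G$ with both endpoints in $V\setminus S$ lies inside a single component of $G-S$, hence inside a single $U_j$.

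So the remaining task is the purely numerical claim: \emph{any positive reals $a_1\ge\dots\ge a_k$ with $\sum_i a_i=m$ and $a_1\le m/2$ can be split into three parts, each summing to at most $m/2$}. I would prove this with the longest-processing-time greedy: process the $a_i$ in non-increasing order and always add the current item to the part whose current sum is smallest. The key observation is that at most three of the $a_i$ can exceed $m/4$ (four such items would already sum to more than $m$), so after sorting we have $a_i\le m/4$ for all $i\ge 4$. For $i\le 3$ the item is placed into an empty part, so that part then has sum $a_i\le m/2$. For $i\ge 4$, if $s$ denotes the smallest of the three current part-sums just before the insertion, then $3s\le \sum_{j<i}a_j\le m-a_i$, and hence after inserting $a_i$ that part has sum at most $s+a_i\le \tfrac{m-a_i}{3}+a_i=\tfrac{m+2a_i}{3}\le \tfrac{m+m/2}{3}=\tfrac{m}{2}$. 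Thus every part stays within $m/2$ throughout, proving the claim and hence the lemma.

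The only real subtlety, and the step I expect to be the main obstacle, is the ordering used by the greedy. With an arbitrary insertion order the same averaging bound only gives $\tfrac{m+2a_i}{3}\le \tfrac{m+m}{3}=\tfrac{2m}{3}$, which is too weak; it is precisely sorting the items first, combined with the fact that there are at most three ``large'' ($>m/4$) items, that brings the bound down to $m/2$. An alternative route avoiding the greedy is a local exchange argument: take a three-partition minimizing the largest part-sum, and if that value exceeds $m/2$, move the smallest item of the heaviest part to the lightest part; since the heaviest part then contains at least two items, a short calculation shows the maximum strictly decreases, a contradiction. I would present the greedy version, as it is self-contained and constructive, and then finish by spelling out the translation back to $G$: set $U_j$ to be the union of the components assigned to part $j$, observe $|U_j|\le m/2=\tfrac12(n-|S|)$, and note that $\{U_1,U_2,U_3\}$ partitions $V\setminus S$ with no edge between distinct $U_i$ and $U_j$.
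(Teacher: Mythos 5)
Your proof is correct and follows exactly the route the paper indicates: apply \cref{lem:RobertsonS86} and then group the connected components of $G-S$ into three parts of size at most $\tfrac12(n-|S|)$ each, a step the paper itself only sketches by citing Gruber and Holzer. Your sorted-greedy (LPT) argument is a valid and self-contained way to fill in that load-balancing step, so there is nothing to object to.
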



\section{The improved lower bound}
\label{sec:improved-bound}

To prove the improved lower bound (\cref{eq:ours}), we need \cref{lem:L-quadratic-form,lem:Qff} below, which are well known.
\cref{lem:L-quadratic-form} is easy to derive from the definition of the Laplacian matrix and 
often used for real vectors $x$ (see e.g., \cite{AlonM85}).
\cref{lem:Qff} holds by Rayleigh's principle (see \cite{NobleD1977}).
For a complex vector $x$, we denote its conjugate transpose by $x^{*}$.
\begin{lemma}
\label{lem:L-quadratic-form} 
Let $L$ be the Laplacian matrix of a graph $G = (V,E)$.
For every $x \in \mathbb{C}^{V}$, it holds that
$\quadform{L}{x} = \sum_{\{u,v\} \in E} |x(u) - x(v)|^{2}$.
\end{lemma}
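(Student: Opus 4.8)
The plan is to prove \cref{lem:L-quadratic-form} by a straightforward expansion of the quadratic form $\quadform{L}{x}$ using the explicit entries of $L$ given in the definition, being careful only about where the complex conjugation sits.

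First I would write $\quadform{L}{x} = \sum_{u \in V}\sum_{v \in V} \overline{x(u)}\, L(u,v)\, x(v)$ and split the double sum according to the three cases in the definition of $L(u,v)$. The diagonal terms $u = v$ contribute $\sum_{u \in V} \deg(u)\,|x(u)|^{2}$; the off-diagonal terms with $\{u,v\} \in E$ contribute $-\sum_{\{u,v\} \in E}\bigl(\overline{x(u)}x(v) + \overline{x(v)}x(u)\bigr)$, where each unordered edge $\{u,v\}$ accounts for the two ordered pairs $(u,v)$ and $(v,u)$; and all remaining terms vanish.

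Next I would rewrite the diagonal contribution edge by edge: since each vertex $u$ lies in exactly $\deg(u)$ edges, $\sum_{u \in V} \deg(u)\,|x(u)|^{2} = \sum_{\{u,v\} \in E}\bigl(|x(u)|^{2} + |x(v)|^{2}\bigr)$. Combining this with the edge part and grouping the four terms associated with a fixed edge $\{u,v\}$ yields $\sum_{\{u,v\} \in E}\bigl(|x(u)|^{2} + |x(v)|^{2} - \overline{x(u)}x(v) - \overline{x(v)}x(u)\bigr)$, and the summand equals $(x(u)-x(v))\overline{(x(u)-x(v))} = |x(u)-x(v)|^{2}$, which is the claim. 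An equivalent slicker route is to decompose $L = \sum_{\{u,v\} \in E} L_{\{u,v\}}$, where $L_{\{u,v\}}$ is the Laplacian of the single-edge graph on $\{u,v\}$, check $\quadform{L_{\{u,v\}}}{x} = |x(u)-x(v)|^{2}$ by a $2 \times 2$ computation, and sum over edges.

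There is no genuine obstacle here — the statement is elementary and the computation is routine. The only point that requires mild care is keeping the conjugate on the correct factor so that the cross terms assemble into $|x(u)-x(v)|^{2}$ and not into $(x(u)-x(v))^{2}$; this is precisely why the lemma is phrased for complex $x$ with the conjugate transpose $x^{*}$, and the real-vector version used elsewhere is the special case where conjugation is invisible.
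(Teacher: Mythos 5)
Your proof is correct: the expansion of $\quadform{L}{x}$ into diagonal and off-diagonal contributions, the edge-by-edge regrouping of $\sum_{u}\deg(u)|x(u)|^{2}$, and the identification of each edge summand with $|x(u)-x(v)|^{2}$ are all sound, and the single-edge-Laplacian decomposition you mention is an equally valid shortcut. The paper itself gives no proof of this lemma, stating only that it is easy to derive from the definition and citing it as standard, so your argument is exactly the routine verification the authors had in mind, with the conjugation handled correctly for the complex case they need.
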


\begin{lemma}
\label{lem:Qff}
Let $\lambda_{2}$ be the second smallest Laplacian eigenvalue of a graph $G = (V,E)$.
If $x \in \mathbb{C}^{V}$ satisfies $\sum_{v \in V} x(v) = 0$, then $\quadform{L}{x} \ge \lambda_{2} \lVert x \rVert^2$.
\end{lemma}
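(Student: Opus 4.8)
The plan is to prove the inequality directly from the spectral theorem rather than invoke Rayleigh's principle as a black box, so that the complex setting is handled explicitly. First I would record that the Laplacian matrix $L$ is real and symmetric, hence Hermitian; by the spectral theorem it therefore has real eigenvalues $\lambda_{1} \le \lambda_{2} \le \dots \le \lambda_{n}$ and admits an orthonormal basis $v_{1}, v_{2}, \dots, v_{n}$ of $\mathbb{C}^{V}$ consisting of eigenvectors, with $L v_{i} = \lambda_{i} v_{i}$ for each $i$.

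Next I would pin down the bottom of the spectrum. By \cref{lem:L-quadratic-form}, $\quadform{L}{x} = \sum_{\{u,v\} \in E} |x(u) - x(v)|^{2} \ge 0$ for every $x \in \mathbb{C}^{V}$, so $L$ is positive semidefinite and $\lambda_{1} \ge 0$. Since every row of $L$ sums to zero, the all-ones vector $\mathbf{1}$ satisfies $L \mathbf{1} = 0$, so $0$ is an eigenvalue and thus $\lambda_{1} = 0$. The one point that needs care is that $\mathbf{1}$ should be the first basis vector: I would choose the orthonormal eigenbasis so that $v_{1} = \mathbf{1} / \sqrt{n}$. This is possible because $\mathbf{1}$ lies in the $\lambda_{1}$-eigenspace (the kernel of $L$), and an orthonormal basis of that eigenspace can be selected to contain $\mathbf{1} / \sqrt{n}$; this matters precisely when $G$ is disconnected and $0$ is a repeated eigenvalue, in which case $\lambda_{2} = 0$ as well and the claimed bound is $\quadform{L}{x} \ge 0$, already guaranteed by the positive semidefiniteness above.

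With this basis fixed, the hypothesis becomes an orthogonality relation. Since $\sum_{v \in V} x(v) = \mathbf{1}^{*} x = \sqrt{n}\, v_{1}^{*} x$, the condition $\sum_{v \in V} x(v) = 0$ is exactly $v_{1}^{*} x = 0$. Writing $x = \sum_{i=1}^{n} c_{i} v_{i}$ with $c_{i} = v_{i}^{*} x$, this forces $c_{1} = 0$. I would then compute $\quadform{L}{x} = \sum_{i=1}^{n} \lambda_{i} |c_{i}|^{2} = \sum_{i=2}^{n} \lambda_{i} |c_{i}|^{2}$ and bound each $\lambda_{i} \ge \lambda_{2}$ for $i \ge 2$, obtaining $\quadform{L}{x} \ge \lambda_{2} \sum_{i=2}^{n} |c_{i}|^{2} = \lambda_{2} \lVert x \rVert^{2}$, which is the claim.

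The only genuine obstacle is the bookkeeping in the previous paragraph: ensuring that $\mathbf{1}$ can be taken as part of the eigenbasis even when $0$ is a repeated eigenvalue, so that the argument is uniform across connected and disconnected graphs. Everything else—real eigenvalues, an orthonormal eigenbasis of $\mathbb{C}^{V}$, and the diagonalization $\quadform{L}{x} = \sum_{i} \lambda_{i} |c_{i}|^{2}$—is routine from the spectral theorem for Hermitian matrices, while the positive semidefiniteness supplied by \cref{lem:L-quadratic-form} is exactly what places $\lambda_{1} = 0$ at the bottom of the spectrum.
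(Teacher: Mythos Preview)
Your argument is correct; the paper does not actually give a proof of this lemma but simply states that it ``holds by Rayleigh's principle'' with a citation to \cite{NobleD1977}. What you have written is precisely the standard spectral-theorem proof of that principle specialized to the case $k=2$, so your approach is the same as the one the paper invokes, only made explicit (and with the careful observation that one may take $v_{1} = \mathbf{1}/\sqrt{n}$ even when the kernel is multidimensional).
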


The following simple observation allows us to find $x \in \mathbb{C}^{V}$ in \cref{lem:L-quadratic-form,lem:Qff}
that achieves a good lower bound of treewidth using \cref{lem:balanced-three-partition}.

\begin{observation}
\label{obs:triangle}
Let $a$, $b$, $c$ be nonnegative real numbers.
If $\max\{a,b,c\} \le (a+b+c)/2$, then
there exist complex numbers $\alpha$, $\beta$, $\gamma$ such that
$|\alpha| = |\beta| = |\gamma| = 1$ and
$a \alpha + b \beta + c \gamma = 0$.
\end{observation}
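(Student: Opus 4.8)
The plan is to recognise the hypothesis as the classical triangle inequality and then to realise $a\alpha$, $b\beta$, $c\gamma$ as the three side vectors of a (possibly degenerate) triangle traversed cyclically, so that they sum to zero. First I would record the elementary equivalence
\[
  \max\{a,b,c\} \le \tfrac{1}{2}(a+b+c)
  \iff
  a \le b+c,\quad b \le c+a,\quad c \le a+b ,
\]
which holds because $2\max\{a,b,c\} \le a+b+c$ says exactly that whichever of $a,b,c$ attains the maximum is at most the sum of the other two, while the remaining two inequalities are then automatic. By the symmetry of the statement in $(a,b,c)$ and $(\alpha,\beta,\gamma)$, we may assume $a = \max\{a,b,c\}$.

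Next I would set $\alpha = 1$, reducing the task to finding unit complex numbers $\beta,\gamma$ with $b\beta + c\gamma = -a$. Viewing $\mathbb{C}$ as the plane: as $\beta$ runs over the unit circle, $b\beta$ traces the circle $C_{1}$ of radius $b$ centred at the origin, whereas $-a - c\gamma$ traces the circle $C_{2}$ of radius $c$ centred at $-a$; the two centres lie at distance $a$. Two circles of radii $b$ and $c$ whose centres are at distance $a$ share a point precisely when $|b-c| \le a \le b+c$, and both of these inequalities are contained in the three triangle inequalities above. Any common point of $C_{1}$ and $C_{2}$ then furnishes the required $\beta$ and $\gamma$; equivalently, one may take the argument of $b\beta$ to be the angle prescribed by the law of cosines for the triangle with side lengths $a,b,c$.

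The only subtlety is the degenerate configurations. If one of the three numbers vanishes --- say $c = 0$ (if instead the maximum $a$ is $0$, then $a=b=c=0$ and any unit $\alpha,\beta,\gamma$ work) --- then the reformulated hypothesis gives $a \le b$ and $b \le a$, hence $a = b$, and we simply take $\alpha = 1$, $\beta = -1$, and $\gamma = 1$, the value of $\gamma$ being irrelevant because its coefficient is $0$. When $a,b,c$ are all positive the circle argument applies verbatim, the triangle being permitted to collapse to a segment; this is exactly the boundary case $a = b+c$, handled by $\beta = \gamma = -1$.

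I do not expect a genuine obstacle here: the argument is entirely elementary, and the only points demanding care are the equivalence in the first step and the bookkeeping for the degenerate cases, where a vanishing length leaves the corresponding unit complex number free to be chosen arbitrarily.
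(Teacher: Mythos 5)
Your proposal is correct and follows essentially the same route as the paper: interpret the hypothesis as the triangle inequality and realise $a\alpha$, $b\beta$, $c\gamma$ as the side vectors of a (possibly degenerate) triangle traversed cyclically. The only cosmetic difference is that you justify the triangle's existence via a circle-intersection (law of cosines) argument and treat vanishing side-lengths explicitly, while the paper asserts the triangle directly for the strict case and disposes of the degenerate case $a+b=c$ with the choice $\alpha=\beta=1$, $\gamma=-1$.
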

\begin{proof}
In this proof, $i$ denotes the imaginary unit $\sqrt{-1}$.
By symmetry, we may assume that $\max\{a,b,c\} = c$. 
Since $c \le (a+b+c)/2$, we have $a + b \ge c$.
If $a + b = c$, then we set 
$\alpha = 1$, $\beta = 1$, $\gamma = -1$
and get $a \alpha + b \beta + c \gamma = 0$ and $|\alpha| = |\beta| = |\gamma| = 1$.
Assume that $a + b > c$. Then, there exists a triangle $ABC$ of side-lengths $a = BC$, $b = CA$, $c = AB$.
Let $\theta = \angle{ABC}$ and $\phi = \angle{CAB}$.
In the complex plane, the traversal $B$--$C$--$A$--$B$ along the triangle can be represented 
by $a (\cos \theta + i \sin \theta)$, $b (\cos \phi - i \sin \phi)$, and $-c$ (see \cref{fig:triangle}).
Thus, by setting $\alpha = \cos\theta + i \sin\theta$, $\beta=\cos\phi - i \sin\phi$, and $\gamma = -1$,
we have $a \alpha + b \beta + c \gamma = 0$ and $|\alpha| = |\beta| = |\gamma| = 1$.
\end{proof}
\begin{figure}[htb]
  \centering
  \includegraphics[scale=1]{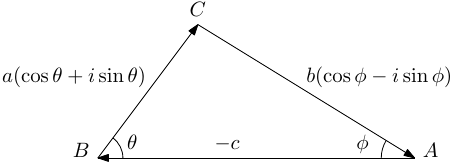}
  \caption{A visual proof of \cref{obs:triangle}.}
  \label{fig:triangle}
\end{figure}

Now we are ready to prove \cref{eq:ours}.
\begin{theorem}
\label{thm:tw-lb}
For every $n$-vertex graph $G$ with maximum degree $\Delta \ge 1$, it holds that
\[
  \tw(G) \ge \frac{n \lambda_{2} }{\Delta + \lambda_{2}} -1,
\]
where $\lambda_{2}$ is the second smallest Laplacian eigenvalue of $G$.
\end{theorem}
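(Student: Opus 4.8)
The plan is to combine \cref{lem:balanced-three-partition} with \cref{obs:triangle} to construct a test vector $x \in \mathbb{C}^V$ that is orthogonal to the all-ones vector, and then play off the upper bound on $\quadform{L}{x}$ from \cref{lem:L-quadratic-form} against the lower bound $\lambda_2 \lVert x \rVert^2$ from \cref{lem:Qff}. Concretely, take $S$, $U_1$, $U_2$, $U_3$ as in \cref{lem:balanced-three-partition}, set $m = n - |S|$, and write $u_i = |U_i|$, so $u_1 + u_2 + u_3 = m$ and $u_i \le m/2$ for each $i$. Apply \cref{obs:triangle} to the triple $(u_1, u_2, u_3)$ to get unit-modulus complex numbers $\alpha_1, \alpha_2, \alpha_3$ with $u_1\alpha_1 + u_2\alpha_2 + u_3\alpha_3 = 0$. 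Then define $x$ by $x(v) = \alpha_i$ if $v \in U_i$ and $x(v) = 0$ if $v \in S$. By construction $\sum_{v\in V} x(v) = 0$, so \cref{lem:Qff} gives $\quadform{L}{x} \ge \lambda_2 \lVert x \rVert^2 = \lambda_2 m$.

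For the upper bound, I would use \cref{lem:L-quadratic-form}: $\quadform{L}{x} = \sum_{\{u,v\}\in E} |x(u)-x(v)|^2$. Since there are no edges between distinct $U_i$ and $U_j$, every edge with a nonzero contribution has at least one endpoint in $S$ (edges inside a single $U_i$ contribute $0$ because $x$ is constant there). For such an edge $|x(u)-x(v)|^2 \le 4$ since all values have modulus at most $1$. The number of edges incident to $S$ is at most $\Delta |S|$, so $\quadform{L}{x} \le 4\Delta|S|$. Actually I expect the sharper accounting to be the crux: for an edge from $s \in S$ to $v \in U_i$ we have $|x(s) - x(v)|^2 = |0 - \alpha_i|^2 = 1$, and for an edge inside $S$ the contribution is $0$, so in fact $\quadform{L}{x} \le \Delta |S|$ — the gain of the factor $4$ over a cruder bound is exactly what sharpens the constant from the earlier papers. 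Combining, $\lambda_2 m \le \Delta |S|$, i.e. $\lambda_2 (n - |S|) \le \Delta |S|$, whence $|S| \ge n\lambda_2/(\Delta + \lambda_2)$; since $|S| \le \tw(G)+1$, the theorem follows.

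One point that needs care: \cref{lem:Qff} and \cref{obs:triangle} both require the relevant quantities to be well-defined and the hypotheses to hold. The hypothesis $\max\{u_1,u_2,u_3\} \le (u_1+u_2+u_3)/2$ is exactly the balance condition from \cref{lem:balanced-three-partition}, so that is automatic. If $m = 0$ then $S = V$, and the bound $\tw(G) \ge n\lambda_2/(\Delta+\lambda_2) - 1$ holds trivially because $|S| = n$; but I should also confirm that $x$ is not the zero vector when applying \cref{lem:Qff} — if $m > 0$ then $\lVert x \rVert^2 = m > 0$, so $x \ne 0$ and \cref{lem:Qff} applies as stated. (If $m = 0$ we do not need the lemma at all.) It is also worth checking that $\Delta \ge 1$ guarantees $\Delta + \lambda_2 > 0$ so the claimed inequality makes sense; since $G$ has an edge, $\lambda_2 \ge 0$ and $\Delta \ge 1$, so the denominator is positive.

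The main obstacle I anticipate is getting the constant right in the edge-counting step: the whole improvement over \cref{eq:ChandranS03} and over the authors' earlier bound hinges on recognizing that, because $x$ vanishes on $S$, each edge incident to $S$ contributes at most $1$ (not $4$) to $\quadform{L}{x}$, and on correctly bounding the total number of such edges by $\Delta|S|$ rather than something larger. Everything else is a short chain of substitutions. I would present the three-partition choice and the construction of $x$ first, then the two-sided estimate on $\quadform{L}{x}$, and finish with the rearrangement and the appeal to $|S| \le \tw(G) + 1$.
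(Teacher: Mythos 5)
Your proposal is correct and follows essentially the same argument as the paper: the balanced three-partition, the unit-modulus complex weights from the triangle observation, the two-sided estimate $\lambda_2(n-|S|) \le \quadform{L}{x} \le \Delta|S|$ (with the key point that each edge incident to $S$ contributes exactly $1$ since $x$ vanishes on $S$), and the conclusion via $|S| \le \tw(G)+1$. The only differences are cosmetic bookkeeping (your explicit handling of the $m=0$ case and the nonzero-vector check), which the paper leaves implicit.
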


\begin{proof}
Let $G = (V,E)$.
By \cref{lem:balanced-three-partition}, there is a partition $(S, U_{1}, U_{2}, U_{3})$  of $V$
such that $|S| \le \tw(G)+1$, $|U_{i}| \le \frac{1}{2} (n-|S|)$ for each $i \in \{1,2,3\}$, and
there is no edge connecting two sets $U_{i}$ and $U_{j}$ for $i \ne j$, where some of the four sets may be empty.

By \cref{obs:triangle}, there exist complex numbers $\alpha_{1}$, $\alpha_{2}$, $\alpha_{3}$
such that $|\alpha_{i}| = 1$ for $i \in \{1, 2, 3\}$ and $\sum_{i \in \{1,2,3\}} |U_{i}| \, \alpha_{i} = 0$.
We define $x \in \mathbb{C}^{V}$ as follows:
\[
  x(v)=
  \begin{cases}
    \alpha_{i} & v \in U_{i}, \\
    0          & v \in S.
  \end{cases}
\]
Since $\sum_{v \in V} x(v) = \sum_{i \in \{1,2,3\}} |U_{i}| \, \alpha_{i} = 0$,
\cref{lem:Qff} implies that $\quadform{L}{x} \ge \lambda_{2} \lVert x \rVert^{2}$.
Since $\lVert x \rVert^2 = \sum_{i \in \{1,2,3\}} |\alpha_{i}|^{2} \, |U_{i}| = \sum_{i \in \{1,2,3\}} |U_{i}| =  n - |S|$,
we have 
\[
  \quadform{L}{x} \ge \lambda_{2} (n - |S|).
\]

By \cref{lem:L-quadratic-form}, $\quadform{L}{x} = \sum_{\{u,v\} \in E} |x(u) - x(v)|^{2}$ holds.
Observe that an edge connecting two vertices in the same part of the partition $(S, U_{1}, U_{2}, U_{3})$ does not contribute to this sum 
as its endpoints have the same value under $x$.
Since there is no edge between two sets $U_{i}$ and $U_{j}$ for $i \ne j$,
only the edges between $S$ and $V \setminus S$ contribute to the sum $\sum_{\{u,v\} \in E} |x(u) - x(v)|^{2}$.
For an edge $\{u,v\}$ with $u \in S$ and $v \in V \setminus S$,
we have $|x(u) - x(v)|^{2} = |x(v)|^{2} = 1$ as $x(v) = \alpha_{i}$ for some $i \in \{1,2,3\}$.
Since there are at most $\Delta |S|$ edges between $S$ and $V \setminus S$, it holds that
\[
  \quadform{L}{x} \le \Delta |S|.
\]

By combining the lower and upper bounds of $\quadform{L}{x}$ obtained above, we get $\Delta |S| \ge \lambda_{2} (n - |S|)$,
and thus $|S| \ge n \lambda_{2} / (\Delta + \lambda_{2})$.
As $\tw(G) \ge |S| -1$, the theorem follows.
\end{proof}

\begin{remark}
We can see that \cref{thm:tw-lb} is \emph{almost} tight for complete bipartite graphs as follows.
For positive integers $p$ and $q$ with $p \le q$, let $K_{p,q}$ denote the complete bipartite graph with $p$ vertices on one side and $q$ vertices on the other side.
The maximum degree $\Delta$ of $K_{p,q}$ is $\max\{p,q\} = q$.
It is known that the second smallest Laplacian eigenvalue $\lambda_{2}$ of $K_{p,q}$ is $\min\{p,q\} = p$~\cite[Section~1.4.2]{BrouwerH2011}.
Thus, \cref{thm:tw-lb} gives a lower bound of $(p+q) p / (q + p) -1 = p - 1$.
This lower bound is only~$1$ less than the exact value as $\tw(K_{p,q}) = \min\{p,q\} = p$~\cite{Bodlaender98}.
\end{remark}

\section{The lower bound in terms of $\lambda_{2}$ and $\lambda_{n}$}
\label{sec:second-bound}

Gu and Liu~\cite{GuL22} studied a spectral lower bound of the matching number
and showed the following lemma as one of their main tools.
\begin{lemma}
[\cite{GuL22}]
\label{lem:lambda2-and-n}
Let $G = (V,E)$ be an $n$-vertex non-complete graph with at least one edge.
Let $(S, X, Y)$ be a partition of $V$ such that 
$|X| \le |Y|$ and there is no edge between $X$ and $Y$.
Then,
\[
  |S| \ge \frac{2 \lambda_{2}}{\lambda_{n} - \lambda_{2}} \cdot |X|,
\]
where $\lambda_{2}$ and $\lambda_{n}$ are 
the second smallest and the largest Laplacian eigenvalues of $G$, respectively.
\end{lemma}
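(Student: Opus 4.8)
\medskip
\noindent\emph{Proof sketch.}
The plan is to derive the inequality from a quantitative refinement of Fiedler's bound $\lambda_{2} \le \kappa(G)$. One may assume $\lambda_{2} > 0$ (i.e.\ $G$ is connected) and $X \neq \emptyset$, since otherwise the right-hand side is $0$; then $Y \neq \emptyset$ as well and $S$ is a vertex cut, so $|S| \ge \kappa(G) \ge \lambda_{2}$. As $G$ is non-complete with at least one edge, $\lambda_{2} < \lambda_{n}$, and multiplying out shows that the claim is equivalent to
\[
  \lambda_{2} \;\le\; \frac{\lambda_{n}\,|S|}{\,|S| + 2|X|\,}.
\]
Since $|S| + 2|X| \le |S| + |X| + |Y| = n \ge \lambda_{n}$, this is Fiedler's inequality multiplied by $\lambda_{n}/(|S|+2|X|)$; it is a genuine improvement over Fiedler when $\lambda_{n} < |S|+2|X|$, and it becomes tight exactly when $\lambda_{n} = n$ and $|X| = |Y|$, as happens for the graphs $H \vee \overline{K_{2t}}$ with $H$ on at most $2t$ vertices and $X, Y$ the two halves of the independent set.

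To prove the displayed inequality I would rerun Fiedler's test-vector argument, tracking the slack. Let $x \in \mathbb{R}^{V}$ equal $|Y|$ on $X$, $-|X|$ on $Y$, and $0$ on $S$; then $\sum_{v} x(v) = 0$ and $\lVert x \rVert^{2} = |X||Y|(|X|+|Y|)$. By \cref{lem:L-quadratic-form} only the edges between $S$ and $X\cup Y$ contribute to $\quadform{L}{x}$, so $\quadform{L}{x} = |Y|^{2}\,e(X,S) + |X|^{2}\,e(Y,S)$; writing $e(X,S) = |X||S| - \overline e(X,S)$ and $e(Y,S) = |Y||S| - \overline e(Y,S)$, where $\overline e(\cdot,\cdot)$ counts non-edges of $G$ (equivalently edges of the complement $\overline G$), \cref{lem:Qff} gives
\[
  \lambda_{2} \;\le\; \frac{\quadform{L}{x}}{\lVert x \rVert^{2}} \;=\; |S| \;-\; \frac{|Y|^{2}\,\overline e(X,S) + |X|^{2}\,\overline e(Y,S)}{|X||Y|(|X|+|Y|)}.
\]
The subtracted correction term is what must be bounded from below, and here I would bring in $\lambda_{n}$ through the standard identity $\lambda_{n}(G) = n - \lambda_{2}(\overline G)$ together with Rayleigh/cut estimates in $\overline G$: applying $\lambda_{2}(\overline G)\lVert z\rVert^{2} \le \quadform{L_{\overline G}}{z}$ to vectors $z$ constant on each block of the partitions $(S,\,X\cup Y)$, $(X,\,Y\cup S)$, $(Y,\,X\cup S)$ of $\overline G$ — each of which is ``dense'' in $\overline G$ because every $X$--$Y$ pair is an edge there — produces lower bounds on $\overline e(X,S)$, $\overline e(Y,S)$ and on their sum in terms of $n-\lambda_{n}$, which one then combines (possibly after re-optimising the value of $x$ on $S$) to obtain the required estimate.

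The step I expect to be the main obstacle is precisely this final combination. Crude estimates lose constant factors: bounding $|Y|^{2}\overline e(X,S)+|X|^{2}\overline e(Y,S)$ below by $|X|^{2}\bigl(\overline e(X,S)+\overline e(Y,S)\bigr)$, or bounding edge counts by $\lambda_{n}|S|$ directly, both fail to reproduce the sharp denominator $\lambda_{n}-\lambda_{2}$, as one can check on the extremal family above, for which the correction term is vacuous and Fiedler's inequality must carry the whole bound. Getting the estimate sharp therefore requires choosing the cuts of $\overline G$ and the test vector on $G$ so that equality holds simultaneously on the extremal graphs, while the case $|X| < |Y|$ is absorbed as slack. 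An alternative route is Haemers interlacing for the $3 \times 3$ quotient matrix of the partition $(X,Y,S)$, whose nonzero eigenvalues $\mu_{2} \le \mu_{3}$ satisfy $\mu_{2} \ge \lambda_{2}$, $\mu_{3} \le \lambda_{n}$, $\mu_{2}\mu_{3} = n\,e(X,S)\,e(Y,S)/(|X||Y||S|)$ and $\mu_{2}+\mu_{3} = e(X,S)/|X| + e(Y,S)/|Y| + (e(X,S)+e(Y,S))/|S|$; there the crux is to turn this two-equation system into a scalar lower bound on $|S|$.
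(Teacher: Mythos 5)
Your preliminary reductions are correct: the equivalence with $\lambda_{2} \le \lambda_{n}|S|/(|S|+2|X|)$ (valid since $\lambda_{2}<\lambda_{n}$ for a non-complete graph with an edge), the handling of the degenerate cases, the test vector $x$ with values $|Y|$, $-|X|$, $0$, and the resulting identity $\lambda_{2} \le |S| - \bigl(|Y|^{2}\overline e(X,S)+|X|^{2}\overline e(Y,S)\bigr)/\bigl(|X||Y|(|X|+|Y|)\bigr)$ all check out. But this is only Fiedler's bound with the slack made explicit, and the step that actually produces the denominator $\lambda_{n}-\lambda_{2}$ is exactly the step you leave open — you say so yourself. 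A single application of \cref{lem:Qff} to one test vector cannot yield a two-eigenvalue bound, and the complement-graph route as described does not isolate $\overline e(X,S)$: a centered indicator of $X$ in $\overline G$ charges all $|X||Y|$ complement edges between $X$ and $Y$ as well, so the resulting lower bounds on the non-edge counts degrade precisely in the regime where the lemma has content. The quotient-matrix interlacing alternative is likewise only gestured at. Note also that the paper itself gives no proof to compare against: it imports this lemma verbatim from Gu and Liu, so the entire burden falls on the part of your argument that is missing.

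The concrete missing ingredient is a genuinely two-sided spectral estimate, most cleanly the Haemers-type separation inequality: if there is no edge between the disjoint sets $X$ and $Y$, then
\[
  \frac{|X|\,|Y|}{(n-|X|)(n-|Y|)} \;\le\; \left(\frac{\lambda_{n}-\lambda_{2}}{\lambda_{n}+\lambda_{2}}\right)^{2}.
\]
This is proved by the bilinear version of your interlacing idea: with $u=\mathbf{1}_{X}$, $v=\mathbf{1}_{Y}$, $P=I-\tfrac{1}{n}J$ ($J$ the all-ones matrix) and $M=L-\tfrac{\lambda_{2}+\lambda_{n}}{2}P$, the spectrum of $L$ on $\mathbf{1}^{\perp}$ lies in $[\lambda_{2},\lambda_{n}]$, so $|u^{T}Mv|\le\tfrac{\lambda_{n}-\lambda_{2}}{2}\lVert Pu\rVert\,\lVert Pv\rVert$, while $u^{T}Lv=-e(X,Y)=0$ forces $u^{T}Mv=\tfrac{\lambda_{2}+\lambda_{n}}{2}\cdot\tfrac{|X||Y|}{n}$; squaring gives the display. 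Once you have it, the lemma is two lines: writing $\rho=(\lambda_{n}-\lambda_{2})/(\lambda_{n}+\lambda_{2})$ and using $n-|X|=|Y|+|S|$, $n-|Y|=|X|+|S|$, the display reads $\rho^{2}\ge\frac{|X|}{|X|+|S|}\cdot\frac{|Y|}{|Y|+|S|}$, and since $|X|\le|Y|$ implies $\frac{|Y|}{|Y|+|S|}\ge\frac{|X|}{|X|+|S|}$, you get $\rho\ge\frac{|X|}{|X|+|S|}$, i.e.\ $(\lambda_{n}-\lambda_{2})|S|\ge 2\lambda_{2}|X|$, which is the claim. Until you prove the displayed inequality (by this argument or by a completed quotient-interlacing computation), the proposal does not establish \cref{lem:lambda2-and-n}; your own observation that crude estimates lose constant factors is accurate, and this is the sharp tool that avoids the loss.
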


Our second lower bound (\cref{eq:ours-2}) can be shown by combining \cref{lem:lambda2-and-n} with \cref{lem:balanced-three-partition}.
\begin{theorem}
\label{thm:tw-lb2}
For every $n$-vertex graph $G$ with at least one edge, 
\[
  \tw(G) \ge \frac{2n \lambda_{2}}{3\lambda_{n} - \lambda_{2}}-1,
\]
where $\lambda_{2}$ and $\lambda_{n}$ are 
the second smallest and the largest Laplacian eigenvalues of $G$, respectively.
\end{theorem}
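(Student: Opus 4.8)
The plan is to follow the same template as the proof of \cref{thm:tw-lb}, but to use the off-the-shelf inequality of \cref{lem:lambda2-and-n} in place of the explicit quadratic-form computation. First I would dispose of the complete graphs, which must be excluded since \cref{lem:lambda2-and-n} requires $G$ to be non-complete: for $G = K_n$ we have $\tw(G) = n-1$ and $\lambda_2 = \lambda_n = n$, so the claimed right-hand side equals $2n^2/(2n) - 1 = n-1$ and the bound holds with equality (this is the promised tightness for complete graphs). So from now on assume $G$ is non-complete with at least one edge; then, by the fact recalled in \cref{sec:pre}, $\lambda_2 < \lambda_n$, hence $3\lambda_n - \lambda_2 > 2\lambda_2 \ge 0$ and the statement is well posed.

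Next I would apply \cref{lem:balanced-three-partition} to obtain a set $S$ with $|S| \le \tw(G)+1$ together with a partition of $V \setminus S$ into $U_1, U_2, U_3$ such that $|U_i| \le \tfrac12(n - |S|)$ for each $i$ and there is no edge between distinct $U_i$'s (some $U_i$ may be empty, which will cause no difficulty). Relabel so that $|U_1| \le |U_2| \le |U_3|$ and set $X = U_3$ and $Y = U_1 \cup U_2$. Then $(S, X, Y)$ is a partition of $V$ with no edge between $X$ and $Y$, since there is no edge between $U_3$ and $U_1$ nor between $U_3$ and $U_2$; moreover $|X| = |U_3| \le \tfrac12(n - |S|) \le |U_1| + |U_2| = |Y|$, using $|U_1| + |U_2| = (n - |S|) - |U_3| \ge \tfrac12(n - |S|)$. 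Therefore \cref{lem:lambda2-and-n} applies and gives $|S| \ge \frac{2\lambda_2}{\lambda_n - \lambda_2}\,|U_3|$.

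Finally, since $|U_3|$ is the largest of three nonnegative numbers summing to $n - |S|$, we have $|U_3| \ge \tfrac13(n - |S|)$, so $|S| \ge \frac{2\lambda_2}{3(\lambda_n - \lambda_2)}(n - |S|)$. Collecting the $|S|$ terms in this linear inequality (all coefficients are positive) yields $|S| \ge \frac{2n\lambda_2}{3\lambda_n - \lambda_2}$, and hence $\tw(G) \ge |S| - 1 \ge \frac{2n\lambda_2}{3\lambda_n - \lambda_2} - 1$, as desired. I do not expect a genuine obstacle here; the only point requiring care is the choice of which of $U_1, U_2, U_3$ to take as $X$: it must be a largest one, both so that the hypothesis $|X| \le |Y|$ of \cref{lem:lambda2-and-n} is satisfied and so that the final factor is $\tfrac13$ rather than something weaker. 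Everything else is routine bookkeeping, and the complete-graph case simply needs to be checked by hand as above.
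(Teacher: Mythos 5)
Your proof is correct and follows essentially the same route as the paper: the complete-graph case is handled identically, and the main case combines \cref{lem:balanced-three-partition} with the Gu--Liu inequality (\cref{lem:lambda2-and-n}). The only immaterial difference is that you apply \cref{lem:lambda2-and-n} once to a largest part $U_{3}$ (with $Y = U_{1} \cup U_{2}$) and use $|U_{3}| \ge \frac{1}{3}(n-|S|)$, whereas the paper applies it to each $U_{i}$ with $Y = V \setminus (S \cup U_{i})$ and sums the three inequalities; both yield the same linear inequality $3(\lambda_{n} - \lambda_{2})|S| \ge 2\lambda_{2}(n-|S|)$ and hence the same bound.
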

\begin{proof}
Let $G = (V,E)$. 
If $G$ is the $n$-vertex complete graph $K_{n}$, then $\lambda_{2} = \lambda_{n} = n$ holds~\cite[Section~1.4.1]{BrouwerH2011},
and thus $2n \lambda_{2}/(3\lambda_{n} - \lambda_{2})-1 = n-1 = \tw(K_{n})$.
In the following, we assume that $G$ is not a complete graph (but has at least one edge).

As in the proof of \cref{thm:tw-lb}, let $(S, U_{1}, U_{2}, U_{3})$ be a partition of $V$ that satisfies the conditions in \cref{lem:balanced-three-partition}; 
that is, $|S| \le \tw(G)+1$, $|U_{i}| \le \frac{1}{2} (n-|S|)$ for each $i \in \{1,2,3\}$, and
there is no edge connecting two sets $U_{i}$ and $U_{j}$ for $i \ne j$, 
where some of the four sets may be empty.

For each $i \in \{1, 2, 3\}$, as $|U_{i}| \le |(U_{1} \cup U_{2} \cup U_{3}) \setminus U_{i}| = |V \setminus (S \cup U_{i})|$,
we can apply \cref{lem:lambda2-and-n} by setting $X = U_{i}$ and $Y = V \setminus (S \cup U_{i})$, and we get
\[
  |S| \ge \frac{2\lambda_{2}}{\lambda_{n} - \lambda_{2}} \cdot |U_{i}|.
\]
By adding the inequalities for all $i \in \{1, 2, 3\}$, we obtain
\[
  3|S|
  \ge \frac{2\lambda_{2}}{\lambda_{n} - \lambda_{2}} \, (|U_{1}| + |U_{2}| + |U_{3}|)
  = \frac{2\lambda_{2}}{\lambda_{n} - \lambda_{2}} \, (n - |S|),
\]
which implies $|S| \ge 2n \lambda_{2} / (3\lambda_{n} - \lambda_{2})$.
Now the theorem follows as $\tw(G) \ge |S| -1$.
\end{proof}

\begin{remark}
\cref{thm:tw-lb2} is tight for complete graphs
as observed in the first paragraph of its proof.
\end{remark}


\section{Conclusion}

In this paper, we have shown an improved lower bound of treewidth in terms of the second smallest Laplacian eigenvalue (\cref{thm:tw-lb})
and another lower bound in terms of the largest and the second smallest Laplacian eigenvalues (\cref{thm:tw-lb2}).
The first one is almost tight for complete bipartite graphs with an additive gap of~$1$,
and the second one is tight for complete graphs.
It would be interesting to determine whether the first bound is already tight for some graphs or if it can be further improved.


\section*{Acknowledgments}
The authors thank Nobutaka Shimizu for valuable comments and discussions.


\bibliographystyle{plainurl}
\bibliography{sptw2}

\end{document}